\newcommand{\C}{\mathbb{C}}
\newcommand{\R}{\mathbb{R}}
\newcommand{\g}{\frak{g}}
\newtheorem{dfn}{Definition}[section]
\newtheorem{thm}[dfn]{Theorem}
\newtheorem{prop}[dfn]{Proposition}
\begin{document}

\title[Sasakian geometry and Heisenberg groups]{Sasakian geometry and Heisenberg groups}

\author[I. Biswas]{Indranil Biswas}

\address{Department of Mathematics, Shiv Nadar University, NH91, Tehsil Dadri,
Greater Noida, Uttar Pradesh 201314, India}

\email{indranil.biswas@snu.edu.in, indranil29@gmail.com}

\author[H. Kasuya]{Hisashi Kasuya}

\address{Department of Mathematics, Graduate School of Science, Osaka University, Osaka, Japan}

\email{kasuya@math.sci.osaka-u.ac.jp}

\thanks{This work was partially supported by JSPS KAKENHI Grant Number 19H01787.}

\subjclass[2010]{53C43, 32L05, 14J32}

\keywords{Sasakian manifold; Heisenberg group; basic vector bundle}

\begin{abstract}
We prove that a compact Sasakian manifolds whose first and second basic Chern classes vanish is locally 
isomorphic to the real Heisenberg group equipped with the standard left invariant Sasakian structure up to 
deformation associated to a basic $1$-from.
\end{abstract}

\maketitle

\section{Introduction}

The $(2n+1)$-dimensional real Heisenberg group $H_{2n+1}$ is the group of $(n+2)\times (n+2)$
matrices of the form
\[\left(
\begin{array}{ccc}
1& x&t \\
0& I &\,^{t} y\\
0&0&1
\end{array}
\right)
\]
where $I$ is the $n\times n$ unit matrix, $x,\,y \,\in\, \R^{n}$ (which is identified with $1\times n$ real 
matrices) and $t\,\in\, \R$. It is well-known that the standard contact structure $\eta_{st}$ on 
$\R^{2n+1}$ can be seen as a left-invariant contact structure on $H_{2n+1}$. We can define a natural 
left-invariant CR structure $T^{1,0}_{H_{2n+1}}$ on $H_{2n+1}$ so that the pair $(T^{1,0}_{H_{2n+1}}, \,
\eta_{st})$ is a Sasakian structure on $H_{2n+1}$ (see Section
\ref{se2.2}. We call $(T^{1,0}_{H_{2n+1}},\, \eta_{st})$ the standard 
left-invariant Sasakian on $H_{2n+1}$ (see \cite{BG} for Sasakian manifolds).

The purpose of this paper is to characterize compact Sasakian manifolds which are uniformized by the 
Sasakian manifold $(H_{2n+1},\, T^{1,0}_{H_{2n+1}},\, \eta_{st})$. For this, we use basic Chern classes 
$c_{i,B}(T^{1,0}_{M})$ associated to a compact Sasakian manifold $(M,\, T_{M}^{1,0}, \,\eta)$. We prove 
the following:

\begin{thm}\label{Heisen}
Let $(M,\, T_{M}^{1,0}, \,\eta)$ be a $(2n+1)$-dimensional compact Sasakian manifold.
If $c_{1, B}(T^{1,0}_{M})\,=\,0$ and $c_{2, B}(T^{1,0}_{M})\,=\,0$, then there is an $A_{B}^1$-deformation
of $(T_{M}^{1,0}, \eta)$ such that the universal covering of $M$ with the Sasakian structure induced by
the deformed Sasakian structure is isomorphic to the $(2n+1)$-dimensional real Heisenberg group with the
standard left-invariant Sasakian structure.
\end{thm}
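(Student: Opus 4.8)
The plan is to follow the Sasakian analogue of the classical fact that a compact K\"ahler manifold with vanishing first and second Chern classes is flat, hence covered by a complex torus; here the role of the torus is played by $H_{2n+1}$ because $M$ is, transversally, the total space of the Reeb foliation over a flat K\"ahler base. Throughout I would work with the transverse K\"ahler form $\omega^T\,=\,\frac12 d\eta$ and with the basic Chern--Weil forms representing the classes $c_{i,B}(T^{1,0}_M)$, computed from the transverse Chern connection on $T^{1,0}_M$.

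First, using $c_{1,B}(T^{1,0}_M)\,=\,0$, I would invoke the transverse Calabi--Yau theorem (the transverse version of the Aubin--Yau theorem due to El Kacimi-Alaoui): there is a basic function $\phi$ for which the transverse K\"ahler form $\omega^T+\sqrt{-1}\,\partial_B\bar\partial_B\phi$ has identically vanishing transverse Ricci form. Since modifying the transverse K\"ahler potential by a basic function amounts to replacing $\eta$ by $\eta$ plus a basic $1$-form, this is exactly the asserted $A_B^1$-deformation of $(T^{1,0}_M,\,\eta)$; after performing it I may assume that the transverse K\"ahler metric is transverse Ricci-flat, while the Reeb field and the transverse complex structure are unchanged.

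Second, I would combine transverse Ricci-flatness with $c_{2,B}(T^{1,0}_M)\,=\,0$ to force the full transverse curvature to vanish. The key is a transverse Chern--Weil / Apte-type identity which, once the transverse Ricci form is zero, expresses the integral of the basic $(2,2)$-form $c_{2,B}$ against $(\omega^T)^{n-2}\wedge\eta$ over $M$ as a positive multiple of $\int_M |R^T|^2\, dV_M$, where $R^T$ denotes the transverse curvature tensor; the transverse Hodge theory of the basic complex (again El Kacimi's machinery) guarantees that this integral depends only on the basic cohomology classes. With $c_{1,B}\,=\,c_{2,B}\,=\,0$ the integral vanishes, so $R^T\,\equiv\,0$ and the transverse K\"ahler metric is flat. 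I expect this to be the main obstacle, since it requires the correct normalization of the basic second Chern form together with a careful transverse integration by parts ensuring that, after the Ricci term is killed, only the full curvature norm survives.

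Finally, I would identify any compact Sasakian manifold with flat transverse K\"ahler metric. Flatness trivializes the transverse holonomy, so the transverse structure develops to $\C^n$ with its standard flat K\"ahler form, and the total space --- being the $\R$-direction whose connection datum $\eta$ has $d\eta\,=\,2\omega^T$ equal to twice the flat K\"ahler form --- develops to the corresponding flat central extension over $\C^n$. With the reconstructed contact and CR data this total space is precisely $H_{2n+1}$ carrying the standard left-invariant Sasakian structure of the introduction; running the developing map on the simply connected cover of $M$ then yields the desired isomorphism between the universal covering of $M$ and $(H_{2n+1},\, T^{1,0}_{H_{2n+1}},\, \eta_{st})$.
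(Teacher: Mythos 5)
Your first two steps coincide with the paper's own argument (its Proposition in Section 4): the transverse Calabi--Yau theorem of El Kacimi-Alaoui produces a transverse Ricci-flat K\"ahler metric in the basic class $[d\eta]_B$, realized by an $A^1_B$-deformation, and the ``Apte-type identity'' that you flag as the main obstacle is exactly the paper's Theorem \ref{BGeq}: a Ricci-flat transverse metric is Hermite--Einstein with $\lambda\,=\,0$, and the Sasakian L\"ubke/Bogomolov--Gieseker inequality, in its equality case with $c_{1,B}\,=\,c_{2,B}\,=\,0$, forces the full curvature of the associated Chern (Tanaka--Webster) connection to vanish. So up to the point where the transverse geometry is flat, your plan is correct and is the same as the paper's.

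The third step is where you diverge from the paper and where there are genuine gaps. The paper does not use a developing map for the transverse structure: it observes that unitary flatness of $T^{1,0}_M$ makes the Tanaka--Webster connection $\nabla^{TW}$ a flat affine connection on all of $TM$ with \emph{parallel torsion} (because $\nabla^{TW}d\eta\,=\,\nabla^{TW}\xi\,=\,0$), invokes Fried's theorem for completeness of $\nabla^{TW}$, and then applies the Kamber--Tondeur structure theorem for complete flat connections with parallel torsion: the parallel vector fields on $\widetilde{M}$ form a Lie algebra with bracket $[X,\,Y]\,=\,-T^{TW}(X,\,Y)$, which the torsion identity $T^{TW}(X,\,Y)\,=\,-d\eta(X,\,Y)\xi$ on $S$ identifies with ${\frak h}_{2n+1}$; parallelism of $I$, $\eta$, $\xi$ then upgrades the resulting diffeomorphism $\widetilde{M}\,\cong\, H_{2n+1}$ to a Sasakian isomorphism. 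Your route could in principle be completed, but as written it has two holes. First, the claim that the total space ``develops to the corresponding flat central extension over $\C^n$'' is precisely the assertion that needs proof: one must show that a Sasakian structure with flat transverse K\"ahler metric is \emph{locally} isomorphic to the standard model on $H_{2n+1}$ (a Darboux-type rigidity statement combining flat transverse coordinates with a normalization of $\eta$); this local model theorem is what gives you a $(G,X)$-structure at all, and it is exactly the content that Kamber--Tondeur supplies in the paper. Second, even granting the $(G,X)$-structure, the developing map $\widetilde{M}\,\longrightarrow\, H_{2n+1}$ is a priori only a local diffeomorphism; to conclude it is a global diffeomorphism onto $H_{2n+1}$ you must invoke completeness --- e.g.\ compactness of $M$ together with the invariance and completeness of the standard Sasakian metric on $H_{2n+1}$ under the relevant automorphism group, or, as in the paper, Fried's completeness theorem. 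Without this, your argument only shows that $\widetilde{M}$ immerses into $H_{2n+1}$: note that any open subset of $H_{2n+1}$ satisfies all of your local hypotheses, yet its universal cover is not the Heisenberg group, so compactness must enter this step explicitly.
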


The above mentioned $A_{B}^1$-deformations are deformations of the Sasakian structure
$(T_{M}^{1,0}, \,\eta)$ on $M$ defined by a basic $1$-form on $M$ without changing the Reeb vector field.
This type of deformations are essential for classification of compact Sasakian manifolds (see \cite{Bel2, KM}).
Theorem \ref{Heisen} is a Sasakian version of the statement that compact K\"ahler manifold with
vanishing first and second Chern classes is flat; this result on compact K\"ahler manifolds is
proved by Yau's theorem \cite{Y} proving Calabi's conjecture (see \cite[Corollary 4.15]{Ko}). 
By the Bieberbach theorem, a compact flat K\"ahler manifold is a finite quotient of a complex torus.

A compact quotient $\Gamma\backslash H_{2n+1}$ of $H_{2n+1}$ by a discrete subgroup $\Gamma $ is called a 
Heisenberg nilmanifold. As a flat manifold, by the generalized Bieberbach theorem (see \cite{DLR}), we can 
say that if the universal covering of a compact Sasakian manifold is the $(2n+1)$-dimensional real 
Heisenberg group with the standard left-invariant Sasakian structure, then it is a finite quotient of a 
Heisenberg nilmanifold.

The special case of Theorem \ref{Heisen} where $n\,=\,1$ is a consequence of Belgun's classification of 
three dimensional compact Sasakian manifolds in \cite{ Bel1, Bel2} proved by the Enriques-Kodaira 
Classification. Another part of Belgun's classification is also extended to higher dimensions in \cite{KM}.

\bigskip

\noindent {\bf Acknowledgement.\,} IB wishes to thank Osaka University for hospitality while the
work was carried out.

\section{Sasakian Geometry}

\subsection{Sasakian structures}

Let $M$ be a $(2n+1)$-dimensional real smooth manifold. A {\em CR-structure} on $M$ is an $n$-dimensional 
complex involutive sub-bundle $T^{1,0}_{M}$ of the complexified tangent bundle $TM_{\C}\,=\, 
TM\otimes_{\mathbb R} {\C}$ such that $T^{1,0}_{M}\cap T^{0,1}_{M}\,=\,\{0\}$, where 
$T^{0,1}_{M}\,=\,\overline{T^{1,0}_{M}}$. Define the real $2n$-dimensional sub-bundle
\begin{equation}\label{s1}
S\,:=\,TM\cap (T^{1,0}_{M}\oplus T^{0,1}_{M})\subset TM.
\end{equation}
We have the almost complex structure $I\,:\, S\,\longrightarrow
\, S$ associated to the decomposition $ S_{\C}\,=\,T^{1,0}_{M}\oplus T^{0,1}_{M}$.

A {\em 
strongly pseudo-convex CR structure} on $M$ is a pair $(T^{1,0}_{M} ,\, \eta) $ consisting
of a CR structure $T^{1,0}_{M}$ and a contact 
$1$-form $\eta$ such that $\ker\eta\,=\,S$ (see \eqref{s1}) and the bilinear form on $S$ defined by 
$L_{\eta}(X,\,Y)\,=\,d\eta(X, IY)$ is a Hermitian metric on $(S,\, I)$.
Take the Reeb vector field $\xi$ associated to the contact $1$-form $\eta$. Then
$(T^{1,0}_{M} ,\, \eta) $ is {\em Sasakian} if for any smooth section $X$ of $T^{1,0}_{M}$ the Lie bracket 
$[\xi,\, X]$ is also a section of $T^{1,0}_{M}$.
We consider the $1$-dimensional foliation ${\mathcal F}_{\xi}$ on $M$ generated by $\xi$.
If $(T^{1,0}_{M} ,\, \eta) $ is Sasakian, then $T^{1,0}_{M}$ defines a transverse holomorphic structure
on ${\mathcal F}_{\xi}$ and $d\eta$ is a transverse K\"ahler form on ${\mathcal F}_{\xi}$.

Let $(M,\, T^{1,0}_{M} ,\, \eta) $ be a Sasakian manifold.
A differential form $\omega$ on $M$ is called {\em basic} if the equations
\[
i_{\xi}\omega\,=\,0\,=\, i_{\xi} d\omega
\]
hold. We denote by $A^{\ast}_{B}(M)$ the subspace of basic 
forms in the de Rham complex $A^{\ast}(M)$. Then
$A^{\ast}_{B}(M)$ is a sub-complex of the de Rham complex $A^{\ast}(M)$. Denote by $H_{B}^{\ast}(M)$
the cohomology of the basic de Rham complex $A^{\ast}_{B}(M)$. We note that $d\eta\,\in\,
A^{2}_{B}(M)$ and $[d\eta]\,\not=\,0\,\in\, H_{B}^{2}(M)$ if $M$ is compact.
We have the bigrading $A^{r}_{B}(M)_{\C}\,=\,\bigoplus_{p+q=r} A^{p,q}_{B}(M)$ as well as the decomposition
of the exterior differential $$d\big\vert_{A^{r}_{B}(M)_{\C}}\,=\,\partial_{B}+\overline\partial_{B}$$
on $A^{r}_{B}(M)_{\C}$, so that $$\partial_{B}\,:\,A^{p,q}_{B}(M)\,\longrightarrow\,
A^{p+1,q}_{B}(M)\ \ \text{ and }\ \
\overline\partial_{B}\,:\,A^{p,q}_{B}(M)\,\longrightarrow\,
A^{p,q+1}_{B}(M)\, .$$ 
We note that $d\eta\,\in\, A^{1,1}_{B}(M)$.
For a strongly pseudo-convex CR-manifold $(M, \,T^{1,0}_{M} ,\, \eta)$
there exists a unique affine connection $\nabla^{TW}$ on $TM$ such that the following
conditions hold (\cite{Tan, Web}):
\begin{enumerate}
\item $S$ is parallel with respect to $\nabla^{TW}$,

\item $\nabla^{TW}I\,=\,\nabla^{TW}d\eta\,=\,\nabla^{TW}\eta\,=\,\nabla^{TW}\xi\,=\,0$, and

\item the torsion $T^{TW}$ of the affine connection $\nabla^{TW}$ satisfies the equation
\[T^{TW} (X,\,Y)\,=\, -d\eta (X,\,Y)\xi
\]
for all $X,\,Y\,\in\, S_{x}$ and $x\,\in\, M$.
\end{enumerate}
This affine connection $\nabla^{TW}$ is called the {\em Tanaka--Webster connection}. It
is known that $(T^{1,0},\,\eta)$ is a Sasakian manifold if and only if 
$T^{TW}(\xi,\,v)\,=\,0$ for all $v\, \in\, TM$. 

Consider the unique homomorphism $\Phi_{\xi}\,:\, TM\,\longrightarrow\, TM $ which extends
$I\,:\, S\,\longrightarrow\, S$ satisfies the condition $\Phi_{\xi}(\xi)\,=\,0$.
Then, $\Phi^{2}_{\xi}\,=\,-{\rm Id}+\xi\otimes \eta$.
We call $\Phi_{\xi}$ the {\em almost contact structure} associated
to a Sasakian structure $(T^{1,0}_{M} ,\, \eta) $.
We define the Riemannian metric $$g_{\eta}(X,\,Y)\,=\,\eta(X)\eta(Y)+ d\eta(X,\, \Phi_{\xi} Y).$$
We call $g_{\eta}$ the {\em Sasakian metric} associated to the Sasakian structure $(T^{1,0}_{M} ,\, \eta) $.

\subsection{Heisenberg groups}\label{se2.2}

We consider the $(2n+1)$-dimensional real Heisenberg group $H_{2n+1}$ which is the group of matrices of the 
form
\[\left(
\begin{array}{ccc}
1& x&t \\
0& I &\,^{t} y\\
0&0&1
\end{array}
\right)
\]
where $I$ is the $n\times n$ unit matrix, $x,\,y \,\in\, \R^{n}$ and $t\,\in\, \R$.
Denote by ${\frak h}_{2n+1}$ the Lie algebra of $H_{2n+1}$.
${\frak h}_{2n+1}$ has the basis
\[
X_{i}\,=\,\frac{\partial}{\partial x_{i}},\ \, Y_{i}\,=\,
\frac{\partial}{\partial y_{j}}+x_{i}\frac{\partial}{\partial t},\ \, T\,=\,\frac{\partial}{\partial t}
\]
such that $[X_{i},\,Y_{i}]\,=\,-T $ for every $i$ and all other brackets are $0$.
We have the dual basis 
\[\alpha_{i}\,=\,dx_{i},\ \, \beta_{i}\,=\,dy_{i},\ \, \gamma\,=\,dt-\sum x_{i}dy_{i}.
\]
Define the CR structure $T^{1,0}_{H_{2n+1}}$ generated by $ X_{1}-\sqrt{-1}Y_{1},\, \cdots ,
\, X_{n}-\sqrt{-1}Y_{n}$ and $\eta_{st}\,=\,-\gamma$.
Then the pair $(T^{1,0}_{H_{2n+1}},\, \eta_{st})$ is a left-invariant Sasakian structure on $H_{2n+1}$.
We call this Sasakian structure the standard left-invariant Sasakian structure on $H_{2n+1}$.
The Tanaka-Webster connection $\nabla^{TW}$ is the left-invariant affine connection such that
$X_{1},\,\cdots,\, X_{n},\, Y_{1},\,\cdots ,\,Y_{n},\, T$ are parallel.

A nilmanifold is a compact quotient $\Gamma\backslash G$ of a simply connected nilpotent Lie group $G$ by a 
discrete subgroup $\Gamma$. If $G\,=\,H_{2n+1}$, then we call it a Heisenberg nilmanifold. Any Heisenberg 
nilmanifold admits a Sasakian structure which is induced by the standard left-invariant Sasakian structure 
on $H_{2n+1}$. Conversely, if a nilmanifold $\Gamma\backslash G$ admits a Sasakian structure, then 
$G\,=\,H_{2n+1}$ and hence $\Gamma\backslash G$ is diffeomorphic to a Heisenberg nilmanifold (see 
\cite{CDMY,Ka17}).

\subsection{Deformations}

Let $(M,\, T^{1,0}_{M},\,\eta)$ be a Sasakian manifold. A {\em $A^1_{B}$-deformation} of $( T^{1,0}_{M}, 
\,\eta)$ is a Sasakian structure $(T^{1,0}_{M\tau},\,\eta^{\tau})$ such that $\eta^{\tau}\,=\,\eta+\tau$ for 
$\tau\,\in\, A^1_{B}$ and $$T^{1,0}_{M\tau}\,=\,\{X-\tau(X)\xi\,\,\big\vert\ \, X\,\in\, T^{1,0}_{M}\}.$$ 
The Reeb vector field of $\eta^{\tau}$ is $\xi$, and the transverse holomorphic structure 
$T^{1,0}_{M\tau}\oplus \langle \xi\rangle$ induced by $T^{1,0}_{M\tau}$ is same as the one induced by 
$T^{1,0}_{M}$. If another contact structure $\eta^{\prime}$ has the Reeb vector field $\xi$, and 
$d\eta^{\prime}$ is transverse K\"ahler for $T^{1,0}_{M}\oplus \langle \xi\rangle$, then we have a 
$A^1_{B}$-deformation $( T^{1,0}_{M\tau},\,\eta^{\tau})$ for $\tau\,=\,\eta-\eta^{\prime}$.

\section{Basic vector bundles over Sasakian manifolds}\subsection{Basic vector bundles}

We define structures on smooth vector bundles over Sasakian manifolds $(M, \,T_{M}^{1,0},\, \eta)$ related to 
${\mathcal F}_{\xi}$ in terms of Rawnsley's partial flat connections in \cite{Ra}.

A structure of basic vector bundle on a $C^\infty$ vector bundle $E$ is a 
linear operator $\nabla_{\xi}\,:\, {\mathcal C}^{\infty}(E)
\,\longrightarrow\, {\mathcal C}^{\infty}(E)$ such that
\[ \nabla_{\xi}(f s)\,=\,f\nabla_{\xi} s+ \xi(f) s.
\]

A differential form $\omega\,\in\, A^{\ast}(M,\,E)$ with 
values in $E$ is called basic if 
the equations
\[
i_{\xi}\omega\,=\,0\,=\,\nabla_{\xi} \omega
\]
hold, where we extend $\nabla_{\xi}\,:\, A^{\ast}(M,\,E)\,\longrightarrow\, A^{\ast}(M,\,E)$. Let 
$A^{\ast}_{B}(M,\, E)\,\subset\,A^{\ast}(M,\,E)$ be the subspace of basic forms in the space 
$A^{\ast}(M,\,E)$ of differential forms with values in $E$. A Hermitian metric on $E$ is called {\em basic} 
if it is $\nabla_{\xi}$-invariant. Unlike the usual Hermitian metric, a basic vector bundle may not admits a 
basic Hermitian metric in general (see \cite{BK2}). For a connection operator $\nabla \,:\, A^{\ast}(M,\,E)\, 
\longrightarrow\, A^{\ast+1}(M,\,E)$ such that the covariant derivative of $\nabla$ along $\xi$ is 
$\nabla_{\xi}$, the curvature $R^{\nabla}\,\in\, A^{2}(M,\, {\rm End} (E))$ is actually
basic, meaning $R^{\nabla} 
\,\in\, A^{2}_{B}(M,\, {\rm End} (E))$. For the Chern forms $c_{i}(E,\, \nabla)\,\in\, A^{2i}(M)$ associated 
with $\nabla$, we have $c_{i}(E,\,\nabla)\,\in\, A_{B}^{2i}(M)$ and we define the basic Chern classes 
$c_{i,B}(E)\,\in\, H^{2i}_{B}(M)$ by the cohomology classes of $c_{i}(E,\,\nabla)$.

A structure of basic holomorphic bundle on a $C^\infty$ vector bundle $E$ is a linear
differential operator $\nabla^{\prime\prime}\,:\, {\mathcal C}^{\infty}(E)
\,\longrightarrow\,
{\mathcal C}^{\infty}(E\otimes ( \langle \xi\rangle \oplus T^{0,1}_{M})^{\ast})$ such that
\begin{itemize}
\item for any $X\,\in\, \langle \xi\rangle \oplus T^{0,1}_{M}$, and any smooth function $f$ on $M$, the equation
\[ \nabla^{\prime\prime}_{X}(f s)\,=\,f\nabla^{\prime\prime}_{X} s+ X(f) s
\]
holds for all smooth sections $s$ of $E$, and

\item if we extend $\nabla^{\prime\prime}$ to $$\nabla^{\prime\prime}\,:\, {\mathcal C}^{\infty}(E\otimes 
\bigwedge^{k} (\langle \xi\rangle \oplus T^{0,1}_{M})^{\ast}) \, \longrightarrow\, {\mathcal 
C}^{\infty}(E\otimes \bigwedge^{k+1} (\langle \xi\rangle \oplus T^{0,1}_{M})^{\ast}),$$ then 
$\nabla^{\prime\prime}\circ \nabla^{\prime\prime}\,=\,0$.
\end{itemize}

A basic holomorphic vector bundle $E$ has a canonical basic bundle structure corresponding to the derivative
$\nabla^{\prime\prime}_{\xi}$. Then $\nabla^{\prime\prime}$ defines the linear operator $\overline{\partial}_{E}
\, : \, A^{p,q}_{B}(M,\, E)\, \longrightarrow\, A^{p,q+1}_{B}(M,\, E)$ so that 
$$\overline{\partial}_{E}( f\omega )\,=\,\overline{\partial}_{B}f \wedge \omega +
f \overline{\partial}_{E}( \omega )$$ for $f\,\in\, A^{0}_{B}(M)$ and $\omega\,\in\, A^{p,q}_{B}(M,\, E)$.
If a basic holomorphic vector bundle $E$ admits a basic Hermitian metric $h$, as complex case, we have a
unique unitary connection $\nabla^{h}$ such that
$\nabla^{h}_{X}\,=\,\nabla^{\prime\prime}_{X}$ for all $X\,\in\, \langle \xi\rangle \oplus T^{0,1}_{M}$.
The Tanaka-Webster connection $\nabla^{TW} $ defines a 
structure of a basic holomorphic bundle on $T^{1,0}_{M}$, and $\nabla^{TW}_{\vert T^{1,0}_{M}}
\,=\,\nabla^{h}$ for the basic Hermitian metric $h$ defined by the transverse K\"ahler structure $d\eta$.

The basic Hermitian metric $h$ is said to be
{\em Hermite-Einstein } if the equation
\begin{equation}\label{hyme}
\Lambda R^{\nabla^{h} }\,\, =\,\, \lambda I
\end{equation}
holds for some constant $\lambda$.

The following is obtained the same way as in the K\"ahler case (\cite{Ko, SimC, BK, BK2}):
\begin{thm}\label{BGeq}
Suppose a basic holomorphic vector bundle $E$ admits a Hermite-Einstein metric $h$.
Then the inequality
\begin{equation}\label{c1}
\int_{M} \left(2c_{2, B}(E) -
\frac{r-1}{r}c_{1, B}(E)^2\right)\wedge(d\eta)^{n-2}\wedge\eta \, \geq\, 0
\end{equation}
holds. Furthermore, if the equality holds, then $R^{\nabla^{h} }\, =\, \lambda I$.
If $c_{1, B}(E)\,=\,0$ and $c_{2, B}(E)\,=\,0$, then $R^{\nabla^{h} }\,=\,0$ and hence $E$ is unitary flat.
\end{thm}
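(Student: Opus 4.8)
The plan is to run the classical Chern--Weil/Lübke argument for the Bogomolov inequality, but transversally with respect to the K\"ahler foliation ${\mathcal F}_{\xi}$, using the curvature $R\,=\,R^{\nabla^{h}}$ of the Hermite--Einstein connection. Since the curvature of a connection compatible with $\nabla_{\xi}$ is basic, we have $R\,\in\, A^{1,1}_{B}(M,\,\mathrm{End}(E))$, so every differential form below lies in the basic complex and is constant along $\xi$; this makes the entire computation a genuine transverse K\"ahler computation for the transverse metric $d\eta$, with $\Lambda$ the contraction against $d\eta$. The final integration is against $(d\eta)^{n}\wedge\eta$, which is a positive multiple of the Sasakian volume form, so integrating a nonnegative basic density and getting zero will force pointwise vanishing.

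First I would reduce the integrand to the trace-free part of the curvature. Write $R\,=\,R_{0}+\tfrac{1}{r}(\mathrm{tr}\,R)\,I$ with $\mathrm{tr}\,R_{0}\,=\,0$. The Chern--Weil representatives are $c_{1,B}(E)\,=\,\tfrac{\sqrt{-1}}{2\pi}\mathrm{tr}\,R$ and $c_{2,B}(E)\,=\,\tfrac{-1}{8\pi^{2}}(\mathrm{tr}\,R)^{2}+\tfrac{1}{8\pi^{2}}\mathrm{tr}(R\wedge R)$. Using $\mathrm{tr}(R\wedge R)\,=\,\tfrac{1}{r}(\mathrm{tr}\,R)\wedge(\mathrm{tr}\,R)+\mathrm{tr}(R_{0}\wedge R_{0})$, a direct substitution collapses the trace terms and yields the basic-form identity
\[
2c_{2,B}(E)-\frac{r-1}{r}\,c_{1,B}(E)^{2}\,=\,\frac{1}{4\pi^{2}}\,\mathrm{tr}(R_{0}\wedge R_{0}).
\]
This isolates all the content of the inequality in the purely trace-free, transversally $(1,1)$ object $R_{0}$.

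Next I would apply the pointwise transverse K\"ahler identity. For a Hermitian-valued basic $(1,1)$-form $\Psi\,=\,\sqrt{-1}\sum\Psi_{i\bar j}\,dz^{i}\wedge d\bar z^{j}$ one has, transversally, $\mathrm{tr}(\Psi\wedge\Psi)\wedge\tfrac{(d\eta)^{n-2}}{(n-2)!}\,=\,\bigl(|\Lambda\Psi|^{2}-|\Psi|^{2}\bigr)\tfrac{(d\eta)^{n}}{n!}$, where the norms are the transverse Hermitian norms. Taking $\Psi\,=\,\sqrt{-1}R_{0}$ (so that $\mathrm{tr}(R_{0}\wedge R_{0})\,=\,-\mathrm{tr}(\Psi\wedge\Psi)$) gives, up to a positive universal constant,
\[
\Bigl(2c_{2,B}(E)-\frac{r-1}{r}\,c_{1,B}(E)^{2}\Bigr)\wedge\frac{(d\eta)^{n-2}}{(n-2)!}\,=\,\frac{1}{4\pi^{2}}\bigl(|R_{0}|^{2}-|\Lambda R_{0}|^{2}\bigr)\frac{(d\eta)^{n}}{n!}.
\]
The Hermite--Einstein equation \eqref{hyme} now enters: from $\Lambda R\,=\,\lambda I$ we get $\mathrm{tr}(\Lambda R)\,=\,r\lambda$, hence $\Lambda R_{0}\,=\,\Lambda R-\tfrac{1}{r}\mathrm{tr}(\Lambda R)\,I\,=\,0$. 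Therefore the integrand reduces to the nonnegative density $\tfrac{1}{4\pi^{2}}|R_{0}|^{2}\,\tfrac{(d\eta)^{n}}{n!}$, and wedging with $\eta$ and integrating over the compact $M$ proves \eqref{c1}. Equality forces $\int_{M}|R_{0}|^{2}\,(d\eta)^{n}\wedge\eta\,=\,0$, so $R_{0}\,\equiv\,0$, i.e. $R^{\nabla^{h}}$ is central (a scalar basic $(1,1)$-form times $I$), which is the asserted conclusion $R^{\nabla^{h}}\,=\,\lambda I$.

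Finally, if $c_{1,B}(E)\,=\,0$ and $c_{2,B}(E)\,=\,0$, the left side of \eqref{c1} is zero, so equality holds and $R^{\nabla^{h}}\,=\,\tfrac{1}{r}(\mathrm{tr}\,R)\,I$. It remains to kill the trace part: $\mathrm{tr}\,R$ is the curvature of the induced Hermite--Einstein structure on the basic line bundle $\det E$, whose basic first Chern class is $c_{1,B}(E)\,=\,0$. The Einstein condition makes $\Lambda(\mathrm{tr}\,R)$ a constant, and pairing with the volume form together with $c_{1,B}(E)\,=\,0$ forces that constant to vanish; then $\sqrt{-1}\,\mathrm{tr}\,R$ is a harmonic basic real $(1,1)$-form representing $0$ in $H^{2}_{B}(M)$, hence $\mathrm{tr}\,R\,=\,0$. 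Thus $R^{\nabla^{h}}\,=\,0$ and $E$ is unitary flat. I expect the main obstacle to be the third step: one must justify that the pointwise Lübke identity and the ``integral of a nonnegative density vanishes implies pointwise vanishing'' conclusion hold verbatim for \emph{basic} forms, which rests on transverse Hodge theory for ${\mathcal F}_{\xi}$ (uniqueness of harmonic representatives, a transverse $dd^{c}$-type statement) rather than on ordinary Hodge theory on $M$; this is exactly the point where ``obtained the same way as in the K\"ahler case'' must be verified in the Sasakian setting.
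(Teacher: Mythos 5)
Your proposal is correct and takes essentially the same route as the paper: the paper offers no written proof of Theorem \ref{BGeq}, but simply asserts it is ``obtained the same way as in the K\"ahler case'' (citing \cite{Ko, SimC, BK, BK2}), i.e.\ exactly the L\"ubke--Kobayashi argument you carry out --- trace-free decomposition of the curvature, the pointwise $(1,1)$-form identity, the Hermite--Einstein condition forcing $\Lambda R_{0}=0$, and the harmonicity argument killing $\mathrm{tr}\,R$ when $c_{1,B}=0$ --- transplanted to the basic complex. The transverse Hodge-theoretic ingredients you rightly flag as the point needing verification (basic harmonic theory and uniqueness of harmonic representatives for the K\"ahler foliation on a compact Sasakian manifold) are precisely what the paper's citations \cite{EKA} and \cite{BK, BK2} supply.
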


\section{Proof of Theorem \ref{Heisen}}

Let $(M, \,T_{M}^{1,0},\, \eta)$ be a $(2n+1)$-dimensional compact Sasakian manifold.
Suppose $c_{1, B}(T^{1,0}_{M})\,=\,0$ and $c_{2, B}(T^{1,0}_{M})\,=\,0$.

\begin{prop}
There exists a $A^1_{B}$--deformation $(T^{1,0}_{M\tau},\,\eta^{\tau})$ of $(T_{M}^{1,0},\, \eta)$ such that 
$ T_{M}^{1,0}$ is unitary flat with respect to a Basic metric induced by $\eta^{\tau}$.
\end{prop}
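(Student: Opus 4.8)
The plan is to apply the Hermite--Einstein/Bogomolov-type inequality from Theorem~\ref{BGeq} to the basic holomorphic bundle $T^{1,0}_M$, but the obstacle is that the metric $h$ coming directly from $d\eta$ need not be Hermite--Einstein. So the strategy is to use the freedom in the $A^1_B$-deformation to \emph{produce} a transverse K\"ahler form whose induced basic metric on $T^{1,0}_M$ solves the Hermite--Einstein equation \eqref{hyme}, at which point the hypotheses $c_{1,B}(T^{1,0}_M)=0$ and $c_{2,B}(T^{1,0}_M)=0$ force, via the equality discussion in Theorem~\ref{BGeq}, the curvature $R^{\nabla^h}$ to vanish and hence $T^{1,0}_M$ to be unitary flat.

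First I would recall that an $A^1_B$-deformation by a basic $1$-form $\tau$ replaces $\eta$ by $\eta^\tau=\eta+\tau$ while keeping the Reeb field $\xi$ fixed, so $d\eta^\tau=d\eta+d\tau$ is again a transverse K\"ahler form provided it stays positive; crucially, $[d\eta^\tau]=[d\eta]+[d\tau]=[d\eta]$ in $H^2_B(M)$ when $\tau$ is basic, so deformations move the metric within a fixed basic cohomology class, exactly as in transverse K\"ahler geometry. The key input is the transverse (basic) version of Yau's solution of the Calabi conjecture: the vanishing $c_{1,B}(T^{1,0}_M)=0$ means the basic first Chern class is zero, so by the transverse Calabi--Yau theorem (the Sasakian analogue, cf.\ the references to \cite{Y,Ko} already invoked for Theorem~\ref{Heisen}) there exists, in the basic cohomology class of $d\eta$, a transverse Ricci-flat K\"ahler form. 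I would realize that form as $d\eta^\tau$ for a suitable basic $1$-form $\tau$, so that the corresponding basic metric on $T^{1,0}_{M\tau}$ has vanishing transverse Ricci curvature.

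Next I would check that transverse Ricci-flatness is precisely the Hermite--Einstein condition \eqref{hyme} with $\lambda=0$ for the bundle $T^{1,0}_{M\tau}$ equipped with its basic metric $h$ induced by $d\eta^\tau$: the trace $\Lambda R^{\nabla^h}$ is the transverse Ricci form, which is $0$. Since the $A^1_B$-deformation does not change the underlying transverse holomorphic structure (the transverse holomorphic distribution $T^{1,0}_{M\tau}\oplus\langle\xi\rangle$ agrees with $T^{1,0}_M\oplus\langle\xi\rangle$), the basic Chern classes are unchanged, so we still have $c_{1,B}(T^{1,0}_{M\tau})=0$ and $c_{2,B}(T^{1,0}_{M\tau})=0$. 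With a Hermite--Einstein metric now in hand, Theorem~\ref{BGeq} applies: the equality case in \eqref{c1} holds because both Chern-class terms vanish, giving $R^{\nabla^h}=\lambda I=0$, so $T^{1,0}_{M\tau}$ is unitary flat with respect to the basic metric induced by $\eta^\tau$.

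The main obstacle, and the step deserving the most care, is establishing the transverse Calabi--Yau existence result in the Sasakian setting and verifying that the resulting transverse Ricci-flat metric can be written as $d\eta^\tau$ for a genuinely \emph{basic} $1$-form $\tau$ keeping $\eta^\tau$ a contact form with Reeb field $\xi$ (i.e.\ that the cohomological adjustment is realized by an admissible deformation and that positivity $d\eta^\tau>0$ is preserved). Once this transverse Yau theorem and the compatibility of the deformation are in place, the passage to unitary flatness is an immediate consequence of the equality case of the Bogomolov-type inequality in Theorem~\ref{BGeq}, so I expect the analytic heart of the proof to lie entirely in the transverse Calabi--Yau step rather than in the subsequent algebraic deduction.
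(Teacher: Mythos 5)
Your proposal is correct and follows essentially the same route as the paper: invoke the transverse (basic) Yau theorem to produce a Ricci-flat transverse K\"ahler form $\omega$ in the class $[d\eta]_B$, view it as a Hermite--Einstein metric on $T^{1,0}_M$ so that the equality case of Theorem~\ref{BGeq} with $c_{1,B}=c_{2,B}=0$ forces unitary flatness, and then realize $\omega$ as $d\eta^{\tau}$ via a basic $1$-form $\tau$ with $[d\eta-\omega]_B=0$. The only gap you flag --- the existence of the transverse Calabi--Yau metric --- is exactly what the paper closes by citing El Kacimi-Alaoui \cite[3.5.5]{EKA}, so no new analysis is needed there.
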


\begin{proof}
Since $c_{1, B}(T^{1,0}_{M})\,=\,0$, Yau's theorem on a transverse K\"ahler structure \cite[3.5.5]{EKA} 
implies the existence of a basic Ricci flat K\"ahler metric $\omega$ in the basic cohomology class 
$[d\eta]_{B}$. In view of the conditions $c_{1, B}(T^{1,0}_{M})\,=\,0$ and $c_{2, B}(T^{1,0}_{M})\,=\,0$,
considering this Ricci flat metric $\omega$ as an 
Hermite-Einstein metric $h$ on $T^{1,0}$, we conclude by Theorem \ref{BGeq} that $T^{1,0}$ is unitary flat. Since 
$[d\eta-\omega]_{B}\,=\,0$, we have a basic $1$-form $\tau\,\in\, A^1_{B}$ such that $d\eta-\omega 
\,=],d\tau$. Hence the $A^1_{B}$--deformation $(T^{1,0}_{M\tau},\,\eta^{\tau})$ is desired one.
\end{proof}

We assume that $d\eta$ induces a flat Hermitian metric on $T^{1,0}_{M}$. It suffices to prove that the 
universal covering of the compact Sasakian manifold $(M,\, T_{M}^{1,0}, \,\eta)$ is isomorphic to the 
$(2n+1)$-dimensional real Heisenberg group $H_{2n+1}$ with the standard left-invariant Sasakian structure 
$(T_{H_{2n+1}}^{1,0},\,\eta_{st})$. Since the Tanaka-Webster connection $\nabla^{TW}$ is a unitary flat
connection on $T^{1,0}_{M}$, it follows that
$\nabla^{TW}$ is a Riemannian flat connection on $TM$. Since 
$\nabla^{TW}d\eta\,=\,\nabla^{TW}\xi\,=\,0$, the torsion $T^{TW}$ is parallel with respect to $\nabla^{TW}$. As 
the $\nabla^{TW}$ is complete (see \cite{Fri}), we can apply \cite[Structure Theorem 3.3]{KT}.

Consider the universal covering $\widetilde{M}$ with the lifted Sasakian metric $(T_{\widetilde{M}}^{1,0}, 
\,\eta)$. Define the space $\g$ of the vector fields on $\widetilde{M}$ which are invariant under the 
parallel transport for $\nabla^{TW}$. Then $\g_{x}\,=\, T_{x}\widetilde{M}$ for each $x\,\in\, 
\widetilde{M}$, and $\g$ is a Lie subalgebra of the Lie algebra of vector fields on $\widetilde{M}$ such 
that $[X,\,Y]\,=\,-T^{TW}(X,Y)$ for all $X,\,Y\,\in \,\g$. From $\nabla^{TW}\xi\,=\,0$ it follows that 
$\xi\,\in \,\g$. Since $S$ in \eqref{s1} is parallel with respect to $\nabla^{TW}$, we can have a subspace 
$\frak{s}\,\subset\, \g$ such that $\frak{s}_{x}\,=\,S_{x}$ for each $x\,\in\, \widetilde{M}$. Thus, for 
$X,\,Y\,\in\, \frak{s}$, we have $[X,\,Y]\,=\,d\eta(X,\,Y)\xi $.

Since $\g_{x}\,=\, T_{x}\widetilde{M}$ for each $x\,\in\, \widetilde{M}$, using the properties of 
$(T_{\widetilde{M}}^{1,0}, \eta)$, we can take, at a point $x_{0}\,\in\, \widetilde{M}$, a basis 
$\widetilde{X}_{1},\,\cdots,\, \widetilde{X}_{n},\, \widetilde{Y}_{1},\,\cdots,\, \widetilde{Y}_{n},\, 
\widetilde{T}$ of $\g$ such that $\widetilde{X}_{1x_{0}}-\sqrt{-1}\widetilde{Y}_{1x_{0}},\,\cdots,\, 
\widetilde{X}_{nx_{0}}- \sqrt{-1}\widetilde{Y}_{nx_{0}}$ is a basis of $T_{\widetilde{M}x_{0}}^{1,0}$, 
$T\,=\,-\xi$ and $d\eta(X_{i},\, Y_{j})\,=\,-\delta_{ij}$, $d\eta(X_{i},\, X_{j})\,=\,d\eta(Y_{i},\, 
Y_{j})\,=\,0$. Thus, we have an isomorphism $\g\,\cong\, {\frak h}_{2n+1}$. By \cite[Structure Theorem 3.3]{KT}, 
we have a connection preserving diffeomorphism $\widetilde{M}\,\cong\, H_{2n+1}$. Under this diffeomorphism, $\g$ 
corresponds to the Lie algebra of the left-invariant vector fields on $H_{2n+1}$. Since $\nabla^{TW}I=0$, the 
subspace $\frak{a}\,\subset\, \g_{\C}$ generated by $\widetilde{X}_{1}-\sqrt{-1}\widetilde{Y}_{1},\,
\cdots,\, \widetilde{X}_{n}- 
\sqrt{-1}\widetilde{Y}_{n}$ satisfies the condition $\frak{a}_{x}\,=\,T_{\widetilde{M}x}^{1,0}$ for
each $x\,\in\, \widetilde{M}$. Since 
$\nabla^{TW}\eta\,=\,0$, we have $\eta\,\in\, \g^{\ast}$ such that $\eta(\xi)\,=\,1$ and
${\rm kernel}(\eta)\,=\, \frak{s}$. Hence, the 
diffeomorphism $\widetilde{M}\,\cong\, H_{2n+1}$ is an isomorphism between the Sasakian manifolds
$(\widetilde{M}, \, T_{\widetilde{M}}^{1,0},\, \eta)$ and $(H_{2n+1},\, T_{H_{2n+1}}^{1,0},\, \eta_{st})$.
This completes the proof.


\begin{thebibliography}{4000000}

\bibitem[Bel1]{Bel1}
F. A. Belgun, 
On the metric structure of non-K\"ahler complex surfaces,
{\it Math. Ann.} {\bf 317} (2000), 1--40. 

\bibitem[Bel2]{Bel2}
F. A. Belgun, Normal CR structures on compact 3-manifolds, {\it Math. Zeit.} {\bf238} (2001), 441--460.

\bibitem[BK]{BK}I. Biswas and H. Kasuya, Higgs bundles and flat connections over compact Sasakian manifolds, 
{\it Comm. Math. Phys.} {\bf 385} (2021), 267--290.

\bibitem[BK2]{BK2}I. Biswas and H. Kasuya, Higgs bundles and flat connections over compact Sasakian 
manifolds, II: quasi-regular bundles, arXiv:2110.10644. {\it Annali della Scuola Normale Superiore di Pisa}
(to appear).

\bibitem[BG]{BG} C. P. Boyer and K. Galicki, {\it Sasakian geometry}, Oxford Mathematical Monographs. Oxford 
University Press, Oxford, 2008.

\bibitem[CDMY]{CDMY} B. Cappelletti-Montano, A. De Nicola, J. C. Marrero and I. Yudin, Sasakian 
nilmanifolds, {\it Int. Math. Res. Not.} 2015 (2015), no. 15, 6648--6660.

\bibitem[DLR]{DLR}
K. Dekimpe, K. B. Lee and F. Raymond, Bieberbach theorems for solvable Lie groups,
{\it Asian J. Math.} {\bf 5} (2001), 499--508.

\bibitem[EKA]{EKA} A. El Kacimi-Alaoui, Op\'erateurs transversalement elliptiques sur un feuilletage 
riemannien et applications, {\it Compositio Math.} {\bf 73} (1990), 57--106.

\bibitem[Fri]{Fri} D. Fried, Distality, completeness and affine structures, {\it J. Differential. Geom.}
{\bf 24} (1986), 265--273.

\bibitem[KT]{KT}
F. Kamber and Ph. Tondeur, Flat manifolds with parallel torsion, {\it J. Differential. Geom.}
{\bf 2} (1968), 385--389.

\bibitem[Ka]{Ka17} H. Kasuya, Mixed Hodge structures and Sullivan's minimal models of Sasakian manifolds,
{\it Ann. Inst. Fourier} {\bf 67} (2017), 2533--2546.

\bibitem[KM]{KM} H. Kasuya, N. Miyatake, Uniformizations of compact Sasakian manifolds, {\it International 
Mathematics Research Notices}, rnad227, https://doi.org/10.1093/imrn/rnad227.

\bibitem[Ko]{Ko} S. Kobayashi, \textit{Differential geometry of complex vector bundles}, Princeton 
University Press, Princeton, NJ, Iwanami Shoten, Tokyo, 1987.

\bibitem[Ra]{Ra} J. H. Rawnsley, Flat partial connections and holomorphic structures in $\mathcal 
C^{\infty}$ vector bundles, {\it Proc. Amer. Math. Soc.} {\bf 73} (1979), 391--397.

\bibitem[Si]{SimC} C. T. Simpson, Constructing variations of Hodge structure using Yang-Mills theory and 
applications to uniformization, \textit{Jour. Amer. Math. Soc.} {\bf 1} (1988), 867--918.

\bibitem[Ta]{Tan} N. Tanaka, {\it A Differential Geometric Study on strongly pseudoconvex CR manifolds},
Lecture Notes in Math., {\bf 9}, Kyoto University, 1975.

\bibitem[We]{Web}S. Webster, Pseudo-Hermitian structures on a real hypersurface, \textit{J. Differential 
Geom.} {\bf 13} (1978), 25--41.

\bibitem[Ya]{Y}
S.-T. Yau, On the Ricci curvature of a compact K\"ahlerian manifold and the complex
Monge-Amp\'ere equation, {\it Comm. Pure and Appl. Math.} XX VI (1978), 339--411.

\end{thebibliography}
\end{document}